\newcommand\R{\mathbb R}
\newcommand\eps{\varepsilon}
\newcommand\dom{\text{\upshape Dom}}
\newcommand\cost{\text{\upshape Cost}}
\newcommand\OPT{\text{\upshape OPT}}
\renewcommand\O{\R_{\geqslant0}}
\newcommand\nablaS{\nabla\mkern-2mu S}
\newtheorem{theorem}{Theorem}
\newtheorem{lemma}[theorem]{Lemma}
\newtheorem{claim}[theorem]{Claim}
\theoremstyle{definition}
\newtheorem{definition}[theorem]{Definition}
\newtheorem{problem}{Problem}
\theoremstyle{remark}
\newtheorem{remark}{Remark}
\let\oequation\equation
\let\oeqend\endequation
\def\@thiseqno{NOTHING}
\def\@myendequation{\eqno \hbox{\normalfont \normalcolor \@thiseqno}$$\@ignoretrue}
\def\@eeequation[#1]{\let\@ooeqend\@myendequation
$$\edef\@thiseqno{#1}\protected@edef\@currentlabel{\csname p@equation\endcsname#1}}%
\renewenvironment{equation}{%
\let\@ooeqend\oeqend
\@ifnextchar[\@eeequation\oequation}
{\@ooeqend}
\title{An optimization problem for continuous submodular functions}
\author{Laszlo Csirmaz\thanks{%
Alfr\'ed R\'enyi Mathematical Institute, Budapest and UTIA, Prague}}
\begin{document}
\maketitle

\begin{abstract}
Real continuous submodular functions, as a generalization of the
corresponding discrete notion to the continuous domain, gained considerable
attention recently. The analog notion for entropy functions requires additional
properties: a real function defined on the non-negative orthant of $\R^n$ is
entropy-like (EL) if it is submodular, takes zero at zero, non-decreasing,
and has the Diminishing Returns property.

Motivated by problems concerning the Shannon complexity of multipartite
secret sharing, a special case of the following general optimization problem
is considered: find the minimal cost of those EL functions which satisfy certain
constraints.

In our special case the cost of an EL function is the maximal value of the
$n$ partial derivatives at zero. Another possibility could be the supremum
of the function range. The constraints are specified by a smooth bounded
surface $S$ cutting off a downward closed subset. An EL function is feasible
if at the internal points of $S$ the left and right partial derivatives of
the function differ by at least one.

A general lower bound for the minimal cost is given in terms of the normals
of the surface $S$. The bound is tight when $S$ is linear. In the
two-dimensional case the same bound is tight for convex or concave $S$. It
is shown that the optimal EL function is not necessarily unique. The paper
concludes with several open problems.
\end{abstract}
\begin{IEEEkeywords}
Continuous submodular optimization; entropy method; secret sharing

\textit{AMS Classification Number}---90C26; 46N10; 49Q10
\end{IEEEkeywords}

\section{Introduction}\label{sec:intro}

Continuous submodularity is a generalization of the discrete notion of
submodularity to the continuous domain. It has gained considerable attention
recently \cite{bach-submodular,bian2020continuous} as efficient convex
optimization methods can be extended to find the minimal and maximal value
of special multivariable continuous submodular functions over a compact and
convex domain. Such optimization algorithms have important applications in
many areas of computer science and applied mathematics such as training deep
neural networks \cite{botton-et-al}, design of online experiments
\cite{chen-2018}, or budget allocation \cite{staib-2017}.
For more information see \cite{bach-learning-submodular}.

Interestingly, the same class of continuous submodular functions arises when
the continuous version of multipartite secret sharing schemes is considered.
In classical secret sharing \cite{beimel-survey} each participant receives a
piece of information -- their \emph{shares} -- such that a qualified subset
of participants can recover the secret from the shares they received, while
unqualified subsets -- based on their shares only -- should have no
information on the secret's value at all. In the multipartite case
\cite{padro:idealmulti,padro:optimization} participants are in $n$ disjoint
groups, and members in the same group have equal roles. In particular, a
qualified subset is described uniquely by the $n$ numbers telling how many
members this subset has from each group. The main question in secret sharing
is the efficiency -- also called complexity -- of the scheme, which is
typically defined as the worst-case ratio of the size of any of the shares
(measured by their Shannon entropy) and the size of the secret. Keeping
track of the total entropy of different subsets of shares, traditional
entropy inequalities imply a lower bound on the complexity
\cite{padro:optimization,metcalf} known as the \emph{Shannon-bound}. No
general method is known which would effectively determine, or even estimate,
the Shannon bound for an arbitrary collection of qualified subsets, and
numerical computation is intractable even for moderately sized problems.
Investigating the same question in the continuous domain allows applying
analytical tools, and results achieved this way might shed light on the
discrete case. This paper, based partly on the last section of \cite{CMP},
is an attempt to initiate such a line of research.

No notion from secret sharing or from information theory will be used later
as they only serve as motivation for the definitions. The family of real
functions corresponding to the (normalized) multipartite entropy will be
called \emph{entropy-like} functions and abbreviated as EL. This function
family is defined in Section \ref{sec:Definitions}; actually it is the
family of pointed, increasing, submodular functions with the ``Diminishing
Returns'' property, see \cite{bian2020continuous}.

The optimization problem corresponding to finding an optimal multipartite
secret sharing scheme is discussed in Section \ref{sec:problem}. It differs
from the well-studied optimization problem for submodular functions
\cite{bach-submodular,bian2020continuous}, where some member of the
continuous submodular function family is given, and the task is to find its
maximal (minimal) value over a compact, convex set. In our case the
optimization problem asks to find an EL function with the smallest cost
satisfying certain constraints. Two cost functions are considered. The first
one corresponds to the discrete worst case complexity discussed above, and
it is the maximal partial derivative of the EL function at the origin. The
second possibility is the supremum of the function range; it corresponds to
another frequently investigated complexity measure in the discrete case: the
total randomness used by the scheme. In Section \ref{sec:problem} a general
lower bound for the worst case complexity is given as Theorem
\ref{thm:main}. This bound is tight when the constraints are
specified by some linear surface.

Section \ref{sec:bipartite} presents results for the bipartite,
two-di\-men\-sion\-al case. General constructions show that the lower bound of
Theorem \ref{thm:main} is also tight for strictly convex or strictly concave
constraint curves. An alternate construction shows that the optimal EL function
is not necessarily unique. Finally, Section \ref{sec:conclusion} concludes
the paper with a list of open problems.

\section{Submodular and entropy-like functions}\label{sec:Definitions}

A real function $f$ defined on subsets of a set is \emph{submodular} if $f(A)+f(B)\ge
f(A\cap B)+f(A\cup B)$ for arbitrary subsets $A$ and $B$, see \cite{bach-submodular} and references therein.
The same notion extended to an arbitrary lattice requires
$$
   f(A)+f(B)\ge f(A\land B)+f(A\lor B)
$$
for any two lattice members $A$ and $B$. 
In particular,
the $n$-variable real function $f$ is \emph{submodular} if it is submodular in
the lattice determined by the partial order on $\R^n$ defined by $x\le y$ if
and only if $x_i\le y_i$ for all coordinates $1\le i\le n$. In this case
$x\land y=\min(x,y)$ and $x\lor y=\max(x,y)$ where minimization
(maximization) is taken coordinatewise, and the submodularity condition
rewrites to
$$
   f(x)+f(y)\ge f(\min(x,y))+f(\max(x,y)).
$$
Entropy-like real functions, also called EL functions, share
additional properties with discrete Shannon entropy functions
\cite{yeung-book}, and are defined as follows.
\begin{definition}\label{def:EL-function}
The $n$-variable real function $f$ is \emph{entropy-like}, or EL function
for short, if it satisfies properties (a) -- (e) below.
\begin{itemize}
\item[(a)] $f$ is defined on the non-negative orthant $\O^n=\{x\in\R^n: x\ge 0\}$.
\item[(b)] $f$ is submodular.
\item[(c)] $f(0)=0$ ($f$ is pointed).
\item[(d)] $f$ is non-decreasing: if $0\le x\le y$ then $f(x)\le f(y)$.
\item[(e)] $f$ has the ``Diminishing Returns'' property \cite{bian2020continuous}. 
It means that for two points 
$0\le x\le y$ differing only in their $i$-th coordinate, increasing that coordinate 
at $x$ and also 
at $y$ by the same amount $\eps$, the gain at $y$ is never bigger
than the gain at $x$. Formally, if
$e_i$ is the $i$-th unit vector and $y=x+\lambda e_i$ for some $\lambda>0$, then 
for every $\eps>0$,
\begin{equation}\label{eq:diminishing}
    f(x+\eps e_i)-f(x) \ge f(y+\eps e_i)-f(y).
\end{equation}
\end{itemize}
\end{definition}
The ``Diminishing Returns'' property models the natural expectation that
adding one more unit of some resource contributes more in the case when one has less available 
amount of that resource.

The left and right partial derivatives of the $n$-variable function $f$ at
$x\in\R^n$ are denoted by $f^-_i(x)$ and $f^+_i(x)$, respectively, and their
definition goes as
\begin{align*}
f^-_i(x)&=\lim_{\eps\to+0}\frac{f(x)-f(x-\eps e_i)}{\eps} \\[-5pt]
\intertext{and}\\[-30pt]
 f^+_i(x)&=\lim_{\eps\to+0}\frac{f(x+\eps e_i)-f(x)}{\eps}
\end{align*}
assuming that the corresponding limits exist. Here $e_i$ is the $i$-th unit vector.

The following claim summarizes some basic properties of
EL functions.
\begin{claim}\label{claim:EL-basic}
Let $f$ be an $n$-variable EL function.
\begin{itemize}
\item[\upshape(a)] $f$ is continuous.
\item[\upshape(b)] $f$ is concave along any positive direction: if $0\le x\le y$
and $0\le\lambda\le1$ then
$$
   \lambda f(x)+(1-\lambda)f(y) \le f(\lambda x + (1-\lambda)y) .
$$
\item[\upshape(c)] The Diminishing Returns property
{\upshape(\ref{eq:diminishing})} holds for arbitrary pair of points $0\le x\le y$.
\item[\upshape(d)] $f$ has both left and right partial derivatives at every point
of its domain.
\item[\upshape(e)] The partial derivatives are non-negative and non-increasing
along any positive direction.
\end{itemize}%
\end{claim}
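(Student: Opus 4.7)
The plan is to establish the items in the order (c), then (b) and (e), then (d), and finally (a), since each one feeds into the next. The key observation is that submodularity together with the one-step DR clause of the definition forces one-dimensional concavity along every positive direction, from which all the remaining properties follow.

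For (c), given $0 \le x \le y$, I travel from $x$ to $y$ along a coordinate staircase $x = z^{(0)} \le z^{(1)} \le \cdots \le z^{(n)} = y$ with $z^{(k)} = z^{(k-1)} + \lambda_k e_k$, and show that the $i$-th gain is non-increasing across each step. For a step with $k=i$ this is precisely the DR clause of Definition~\ref{def:EL-function}. For a step with $k\ne i$, set $A = z^{(k-1)} + \eps e_i$ and $B = z^{(k)}$; a direct computation yields $\min(A,B) = z^{(k-1)}$ and $\max(A,B) = z^{(k)} + \eps e_i$, so submodularity delivers the very same inequality. Chaining across the $n$ steps produces (c).

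Applying (c) coordinatewise to an arbitrary positive direction $v = \sum v_i e_i \ge 0$ upgrades DR to direction $v$: $f(x+\eps v) - f(x) \ge f(y+\eps v) - f(y)$ whenever $0 \le x \le y$ and $\eps>0$. Restricted to a ray, $g(t) := f(z + tv)$ therefore satisfies $g(s+\eps) - g(s) \ge g(t+\eps) - g(t)$ for all $0 \le s \le t$. Setting $s = t-\eps$ gives the midpoint-concavity inequality $2g(t) \ge g(t-\eps) + g(t+\eps)$; combined with the monotonicity of $g$ (and hence its boundedness on compact intervals), this upgrades to genuine concavity of $g$, proving (b). Concavity of the one-variable function $t \mapsto f(x + t e_i)$ immediately supplies both one-sided derivatives at every interior point of its domain and a right derivative where $x_i = 0$, which is (d). Non-negativity of these derivatives follows from monotonicity, and passing the non-increasing difference quotient to the limit $\eps \to 0^+$ gives the ``non-increasing along positive directions'' part of (e).

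For (a), given $x_n \to x$ in $\O^n$ and any $\delta > 0$, the coordinatewise bounds $(x - \delta \mathbf{1})_+ \le x_n \le x + \delta \mathbf{1}$ hold for all large $n$, so by monotonicity $f((x-\delta\mathbf{1})_+) \le f(x_n) \le f(x + \delta\mathbf{1})$. Both bounds converge to $f(x)$ as $\delta \to 0^+$ by one-dimensional continuity of $f$ along the diagonal direction $\mathbf{1}$, which is a consequence of (b). The main subtlety is near the boundary of $\O^n$: a one-variable concave function need not be continuous at the endpoint of its interval, so continuity at $x \in \partial \O^n$ must be pinned down using the telescoping estimate $f(y) - f(x) \le \sum_i f((y_i - x_i) e_i)$ (itself a consequence of (c) with base point $0$), reducing the question to continuity at the origin along each coordinate axis, which is the genuinely delicate step.
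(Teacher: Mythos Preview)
Your treatment of (c), (d), and (e) is correct and matches the paper's: the staircase-plus-submodularity argument for (c) is exactly the paper's idea, and (d), (e) follow from one-variable concavity and the limit of the DR inequality just as there. For (b) you take a different but valid route---deriving a directional DR inequality from (c), then upgrading midpoint concavity to full concavity via monotonicity---whereas the paper argues by induction on the number of coordinates that $x$ and $y$ share, using submodularity at the inductive step. Your route has the pleasant feature of not relying on (a), which the paper's proof of (b) does.

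The gap is in (a), and you have put your finger on it: right-continuity of $t\mapsto f(te_i)$ at $t=0$ is indeed the crux, and you leave it unproved. In fact it cannot be proved from Definition~\ref{def:EL-function}. The one-variable function with $f(0)=0$ and $f(t)=1$ for $t>0$ satisfies all five clauses---submodularity is vacuous in one dimension, monotonicity and pointedness are clear, and the DR inequality reads $1\ge 0$ when $x=0$ and $0\ge 0$ when $x>0$---yet it is discontinuous at the origin. The paper's own argument for (a) only shows that the left and right coordinate limits agree, which handles interior points of $\O^n$ but says nothing where $x_i=0$; so the same gap is present there. Your telescoping bound $f(y)-f(x)\le\sum_i f\big((y_i-x_i)e_i\big)$ is correct and really does reduce boundary continuity to the axis case, but the axis case can fail. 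What \emph{is} true is interior continuity, and also global continuity whenever $\cost(f)<\infty$ (since then $0\le f(te_i)\le t\,f^+_i(0)\to 0$); either of these suffices for everything the paper does afterwards.
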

\begin{proof}
(a) It is enough to show that $f$ is continuous along every coordinate. By
property (d) it is monotone increasing. The left limit
$\lim_{\eps\to+0} f(x-\eps e_i)$ cannot be strictly smaller than the right
limit
$\lim_{\eps\to+0} f(x+\eps e_i)$ as this would contradict the Diminishing
Returns property.

\def\oi{\mathbin{\ooalign{$\hidewidth|\hidewidth$\cr$\circ$}}}
(b) Continuity and the Diminishing Returns property ensures that $f$ is
concave along each coordinate. It means that statement (b) is true when 
points $x$ and $y$ share $n-1$ coordinates. Suppose we have two points
sharing $i$ coordinates, and the claim has been established for point pairs 
sharing $i+1$ or more
coordinates. Denote these points by $(c,x,a)$ and $(d,y,a)$ where $a$
stands for the joint $i$ coordinates, $x$ and $y$ are real numbers, and $c$ and $d$
are the remaining tuples. The linear
combination $\lambda(c,x,a)+(1-\lambda)(d,y,a)$ is shortened to
$(c\oi d,x\oi y,a)$. Using $(c,x,a)\le (d,y,a)$ and the induction hypothesis
for $n-1$ (first line) and for $i+1$ (next two lines) we have
\begin{align*}
   \lambda f(c\oi d,x,a)+(1-\lambda)f(c\oi d,y,a)&\le f(c\oi d,x\oi y,a), \\
   \lambda f(c,x,a)+(1-\lambda)f(d,x,a)&\le f(c\oi d,x,a), \\
   \lambda f(c,y,a)+(1-\lambda)f(d,y,a)&\le f(c\oi d,y,a).
\end{align*}
From here the required inequality
$$
   \lambda f(c,x,a)+(1-\lambda)f(d,y,a)\le f(c\oi d,x\oi y,a)
$$
follows as the submodularity for the points $(c,y,a)$ and ($d,x,a)$ gives
$$
  f(c,y,a)+f(d,x,a)\ge f(c,x,a)+f(d,y,a).
$$

(c) Similarly to (b) by induction on how many coordinates $x$ and $y$
have in common. Observe that if $x$ and $y$ do not differ at their 
$i$-th coordinate then (\ref{eq:diminishing}) is equivalent to
submodularity.

(d) This is immediate as $f$ is continuous and non-decreasing.

(e) Non-negativity is clear. Monotonicity: if $x\le y$ then, for example,
\begin{align*}
   f^+_i(x) &=\lim_{\eps\to+0}\frac{f(x+\eps e_i)-f(x)}{\eps} \\
            &\ge\lim_{\eps\to+0}\frac{f(y+\eps e_i)-f(y)}{\eps}
             = f^+_i(y),
\end{align*}
where the inequality follows from (c). Other cases are similar.
\end{proof}

The next lemma follows easily from the fact that along each coordinate $f$
is increasing and concave, and is given without proof.

\begin{lemma}\label{lemma:limiting}
If $\eps\to+0$, then $f^+_i(x+\eps e_i)\to f^+_i(x)$, and
$f^+_i(x-\eps e_i)\to f^-_i(x)$.
\qed
\end{lemma}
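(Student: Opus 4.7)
The approach is to reduce to the one-variable case along the $i$-th axis and invoke standard properties of concave functions. Fix all coordinates of $x$ except the $i$-th and set
\[
    g(t) := f(x_1,\dots,x_{i-1},t,x_{i+1},\dots,x_n), \qquad t\ge 0.
\]
Claim \ref{claim:EL-basic}(a), (b) and Definition \ref{def:EL-function}(d), applied to pairs of points differing only in the $i$-th coordinate, show that $g$ is continuous, non-decreasing and concave on $[0,\infty)$. The one-sided partial derivatives of $f$ at $x\pm\eps e_i$ coincide with the one-sided derivatives of $g$ at $t_0\pm\eps$, where $t_0:=x_i$. Writing $g'_+$ and $g'_-$ for these one-sided derivatives, the lemma becomes
\[
    \lim_{\eps\to+0} g'_+(t_0+\eps)=g'_+(t_0),\qquad \lim_{\eps\to+0} g'_+(t_0-\eps)=g'_-(t_0).
\]

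The workhorse is the standard concave-function inequality, valid for any $0\le s<t$ and obtained by comparing the secant from $s$ to $t$ with the one-sided tangent lines at both endpoints:
\[
    g'_+(t)\;\le\;g'_-(t)\;\le\;\frac{g(t)-g(s)}{t-s}\;\le\;g'_+(s)\;\le\;g'_-(s).
\]
For the first limit, the monotonicity of $g'_+$ (Claim \ref{claim:EL-basic}(e)) gives $g'_+(t_0+\eps)\le g'_+(t_0)$, hence $\limsup_{\eps\to+0} g'_+(t_0+\eps)\le g'_+(t_0)$. For the matching lower bound I would apply the chain with $s=t_0+\eps$, $t=t_0+\eps'$ where $0<\eps<\eps'$, first send $\eps\to+0$ (continuity of $g$), then $\eps'\to+0$ (definition of $g'_+(t_0)$), to conclude $\liminf_{\eps\to+0} g'_+(t_0+\eps)\ge g'_+(t_0)$.

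For the second limit, applying the chain with $s=t_0-\eps$, $t=t_0$ yields $g'_+(t_0-\eps)\ge g'_-(t_0)$ directly, hence $\liminf\ge g'_-(t_0)$. For the matching upper bound I would apply the chain with $s=t_0-\eps'$, $t=t_0-\eps$, $0<\eps<\eps'$, again sending $\eps\to+0$ (continuity of $g$) and then $\eps'\to+0$ (definition of $g'_-(t_0)$). There is no serious obstacle --- the argument is merely the standard bookkeeping for one-sided derivatives of a concave function, and the only point requiring a little care is the order in which the two auxiliary parameters $\eps$ and $\eps'$ are sent to zero.
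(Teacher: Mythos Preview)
Your argument is correct and matches the paper's intended approach: the paper does not actually prove the lemma but only remarks that it ``follows easily from the fact that along each coordinate $f$ is increasing and concave,'' which is exactly your reduction to the one-variable function $g$ followed by the standard secant/one-sided-derivative inequalities for concave functions. Your two-parameter $\eps,\eps'$ bookkeeping is sound; the only cosmetic point is that for the lower bound in the second limit you could simply quote the chain $g'_-(t_0)\le\frac{g(t_0)-g(t_0-\eps)}{\eps}\le g'_+(t_0-\eps)$ directly, as you in fact do.
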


\begin{remark}\label{remark:1}
The family of EL functions is closed for non-negative linear combination and
truncation: 
if $f_1$, $f_2$ are EL, then so is $\lambda_1f_1+\lambda_2f_2$ for
$\lambda_1$, $\lambda_2\ge 0$; if $f$ is EL and $M\ge 0$ then 
$\min(f,M)$ is EL. Consequently
$$
   f(x)=\min\big(\, {\textstyle\sum c_ix_i},M\big)
$$
is EL for positive $c_i$ and $M$. Similarly, if $f$ is EL and $a\ge 0$, then
$g(x)=f(\min(x,a))$ is EL again.
Further examples of EL functions
will be given in Section \ref{sec:bipartite}.
\end{remark}

\begin{remark}\label{remark:2}
If the sequence $f_k$ of EL functions converge pointwise, then the limit $f$ is
also an EL function, moreover
$$
   f^+_i(x) \le \liminf_k\,(f_k)^+_i(x) \le \limsup_k\,(f_k)^-_i(x)\le f^-_i(x).
$$
\end{remark}

\section{The optimization problem}\label{sec:problem}

According to the intuition discussed in Section \ref{sec:intro} the value of
$n$-variable EL function $f$ at $x\in\O^n$ can be considered as the value of the
(scaled) entropy of the set of shares assigned to a subset of participants which
has members from the $i$-th group proportional to the $i$-th coordinate of
$x$. The right derivative $f^+_i(x)$ can be interpreted as the (scaled)
entropy increase if one more member from the $i$-th group joins this subset,
and $f^-_i(x)$ as the entropy decrease when one member from the $i$-th group
leaves the subset (defined only if $x_i>0$). Consequently the share size of 
a single participant from
group $i$ can be identified to $f^+_i(0)$, the $i$-th right partial derivative of
$f$ at zero. Accordingly, the cost function corresponding to the
maximal share size is
$$
    \cost(f) = \max \{ f^+_1(0), f^+_2(0),\dots,f^+_n(0)\,\}.
$$
While this cost function will be considered in this paper, there are other
possibilities. In the discrete cases the total entropy (the amount of 
randomness needed to generate the whole scheme) is used frequently, this
would correspond to the cost function $\sup\{ f(x): x \in\dom(f) \}$.

In secret sharing the shares of a qualified subset determine the secret,
while the same secret is (statistically) independent of the shares of an
unqualified subset. We call the point $x\in\O^n$ \emph{qualified} if the
corresponding subset is qualified. When decreasing an unqualified subset it
remains unqualified, thus the set of unqualified points are downward closed: if $x$
is unqualified and $0\le y\le x$ then $y$ is unqualified as well. Suppose
the unqualified and qualified points are separated by the smooth
$(n-1)$-dimensional surface $S$. Downward closedness means that the normal
vectors of $S$ pointing outwards (towards qualified points) have
non-negative coordinates. This surface $S$ specifies the secret sharing
problem, namely which subsets of the participants are qualified
and which are not, and thus the optimization problem as well.
The definition below requires slightly stronger properties from such a
separating surface excluding certain problematic cases.
\begin{definition}\label{def:s-surface}
An \emph{s-surface} (secret sharing surface) is a smooth $(n-1)$-dimensional
surface $S$ in the non-negative orthant $\O^n$ satisfying the following properties:
\begin{itemize}
\item[(a)] $S$ avoids $0$,
\item[(b)] $S$ is compact, and
\item[(c)] for every $x\in S$ the normal vector $\nablaS(x)$ pointing outwards has
strictly positive coordinates.
\end{itemize}%
\end{definition}
Consider the subset of participants which corresponds to the point $x\in S$
of the s-surface $S$.
If any member from the $i$-th group leaves this subset, then the subset
becomes unqualified -- and then the secret must be independent of the joint
collection of the associated shares. If any new member from the $i$-th group
joins that subset, it becomes qualified -- meaning that the new share
collection determines the secret. Thus the difference between the before and
after entropy changes, namely $f^-_i(x)-f^+_i(x)$, must cover the entropy
of the secret. The entropy of the secret can be taken to be 1 as this
changes all values up to a scaling factor only. The following definition
summarizes this discussion.
\begin{definition}\label{def:feasible}
The EL function $f$ is \emph{feasible} for $S$, or $S$-feasible,
if for every positive $x\in S$ (that is, $x_i>0$ for all $1\le i\le n$),
\begin{equation}\label{eq:condition}
   f^-_i(x)-f^+_i(x)\ge 1 ~~~ (1\le i\le n).
\end{equation}
(Positivity of $x$ is ensures the existence of $f^-_i(x)$.)
\end{definition}
Optimization problems considered in this paper are of this form: 
given the s-surface $S$, find the minimal cost of the $S$-feasible functions.
\begin{definition}\label{def:opt-problem}
For a given s-surface $S\subseteq \O^n$ $\OPT(S)$ is the optimization
problem
$$
\left\{\mkern -5mu
\begin{array}{ll}
\text{minimize:} & \cost(f) \\[3pt]
\text{subject to:} & \text{$f$ is an $S$-feasible EL function}.
\end{array}\right.
$$
By an abuse of notation, both the problem and its solution -- the infimum of
the costs of $S$-feasible functions -- will be denoted
by $\OPT(S)$.
\end{definition}

As an example let us consider the case when $S$ is the
intersection of the hyperplane
$$
   c_1x_1+c_2x_2+\cdots+c_nx_n=M
$$
and the non-negative orthant,
here $c_i$ and $M$ are positive constants. Observe that the normal at every
$x\in S$ is $\nablaS(x)\allowbreak=\allowbreak(c_1,\dots,c_n)$. Feasible EL
functions will be searched among the one-parameter family
$$
   f(y) = k\cdot\min\big\{{\textstyle\sum}c_iy_i, M\big\}
$$
with positive $k$. All of them are EL functions by Remark \ref{remark:1}.
Pick the positive point $x\in S$ 
and consider $f(x+\eps e_i)$ as a function of $\eps$. It has the constant
value $k\cdot M$ for $\eps\ge 0$, and it is linear with slope $k\cdot c_i$
for $\eps\le 0$. Consequently
$$
   f^-_i(x)-f^+_i=k\cdot c_i,
$$
which is $\ge 1$ if $k\ge1/\min\{c_i\}$. At zero the partial derivatives of $f$
are $k\cdot c_i$, therefore $\cost(f)=k\cdot\max\{c_i\}$. The 
$k=1/\min\{c_i\}$ choice gives an $S$-feasible EL function with cost
$\max\{c_i\}/\min\{c_i\}$, thus
$$
   \OPT(S)\le \frac{\max\{c_i\}}{\min\{c_i\}}.
$$
According to Theorem \ref{thm:main} below the optimal value is actually equal to this
amount, as in this case $\nablaS_i(x)=c_i$ for every $x\in S$.

\begin{theorem}\label{thm:main}
For every s-surface $S$, inner point $x\in S$ and $1\le i,j\le n$ the following
inequality holds:
$$
   \OPT(S) \ge \frac{\nablaS_j(x)}{\strut\nablaS_i(x)}.
$$
\end{theorem}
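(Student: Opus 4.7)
My plan is to reduce the claim to the two-dimensional case and then exploit feasibility at a tangential family of points of $S$ near $x$.

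For the reduction, fix the inner point $x\in S$, set $c_k=\nablaS_k(x)$, and consider the $2$-plane $\Pi=\{y\in\O^n:y_k=x_k\text{ for }k\ne i,j\}$. The intersection $S'=S\cap\Pi$ is a smooth curve through $x$; after identifying $\Pi$ with $\O^2$ via the two free coordinates, $S'$ is an s-curve whose outward normal at (the image of) $x$ has components $(c_i,c_j)$. Writing $\tilde y(s,t)$ for the point agreeing with $x$ outside coordinates $i,j$ and equal to $s,t$ in those coordinates, the shifted restriction $g(s,t)=f(\tilde y(s,t))-f(\tilde y(0,0))$ of any $S$-feasible EL function $f$ is EL on $\O^2$: submodularity, monotonicity, pointedness, and Diminishing Returns all pass directly from $f$ to $g$. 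The $S$-feasibility of $f$ gives $S'$-feasibility of $g$, and by Claim \ref{claim:EL-basic}(e), $g^+_s(0,0)=f^+_i(\tilde y(0,0))\le f^+_i(0)\le\cost(f)$, and likewise for the $t$-coordinate. Hence $\cost(g)\le\cost(f)$, so $\OPT(S')\le\OPT(S)$, and it suffices to prove the theorem for the two-dimensional s-curve $S'$ with normal $(c_i,c_j)$ at the image of $x$.

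In this two-dimensional setting, the naive chain $\cost(g)\ge g^+_k(0)\ge g^-_k(x)\ge 1+g^+_k(x)$, obtained by combining Claim \ref{claim:EL-basic}(e) with feasibility, only yields $\cost(g)\ge 1$, so the sharper bound $c_j/c_i$ must come from using feasibility at more than one point of $S'$. My plan is to pair $x$ with a family of nearby inner points $x(\delta)\in S'$ obtained by moving in the tangent direction $\tau=(c_j,-c_i)$ at $x$ and projecting back onto $S'$ (so $x(\delta)=x+\delta\tau+O(\delta^2)$). Applying submodularity to the incomparable pair $(x,x(\delta))$, combining with the feasibility at both points, and using Lemma \ref{lemma:limiting} to control the one-sided limits of $g^{\pm}_k$ as we approach $S'$ from either side, one derives a relation between $g^+_k$ just above and just below $S'$. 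Passing to $\delta\to 0$ and integrating along a tangential arc of $S'$ weighted by the tangent direction $(c_j,-c_i)$ yields $\cost(g)\ge c_j/c_i$; the symmetric argument with $\tau$ replaced by $-\tau$ gives $\cost(g)\ge c_i/c_j$.

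The main obstacle will be making the tangent-step argument quantitative enough to produce the factor $c_j/c_i$ in place of the trivial $1$. The delicate point is that the derivative fields $g^{\pm}_k$ are in general discontinuous across $S'$, so any tangential integration has to weight the unit jumps $g^-_k-g^+_k\ge 1$ precisely by the tangent direction of $S'$ at each point; combined with the transverse bound $g^+_k(y)\le\cost(g)$ from Claim \ref{claim:EL-basic}(e), the geometric factor $c_j/c_i$ then emerges. The linear case worked out just before the theorem serves as the template: for a hyperplane $S:\sum c_ky_k=M$, the explicit function $f(y)=(\min_\ell c_\ell)^{-1}\min(\sum_kc_ky_k,M)$ realizes $f^+_k(0)=c_k/\min_\ell c_\ell$, matching the bound exactly and pinning down what the general tangential-arc argument must produce.
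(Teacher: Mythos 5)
There is a genuine gap: the central quantitative step is never carried out. Your reduction to the two-dimensional slice $\Pi$ is sound (and the verification that $g$ inherits the EL properties, the feasibility, and $\cost(g)\le\cost(f)$ is correct), but it is also unnecessary, and after it you only \emph{describe a plan} for extracting the factor $\nablaS_j(x)/\nablaS_i(x)$ --- ``one derives a relation between $g^+_k$ just above and just below $S'$'' and then ``integrating along a tangential arc \dots yields $\cost(g)\ge c_j/c_i$.'' You even flag this as ``the main obstacle,'' and it is precisely the content of the theorem; nothing in the proposal overcomes it. The correct mechanism requires no integration over an arc at all: take a single nearby point $y=x-we_i+he_j\in S$ (your $x(\delta)$ with $\tau=(c_j,-c_i)$ is the same point in disguise), set $u=\min(x,y)=x-we_i$ and $v=\max(x,y)=x+he_j$, and write down four one-dimensional concavity/monotonicity inequalities ($w f^+_i(u)\ge f(x)-f(u)$, $h f^+_j(x)\ge f(v)-f(x)$, $f(y)-f(u)\ge h f^-_j(y)$, $f(v)-f(y)\ge0$) whose sum telescopes to
$$
  w\,f^+_i(u)\;\ge\;h\big(f^-_j(y)-f^+_j(x)\big).
$$
Feasibility at $y$ and monotonicity of the derivatives then give $w f^+_i(u)\ge h\big(1+f^+_j(v)-f^+_j(x)\big)$, and letting $w\to+0$ (so $h/w\to\nablaS_j(x)/\nablaS_i(x)$, $f^+_i(u)\to f^-_i(x)$, $f^+_j(v)\to f^+_j(x)$ by Lemma~\ref{lemma:limiting}) yields $f^-_i(x)\ge\nablaS_j(x)/\nablaS_i(x)$, hence the bound on $\cost(f)\ge f^+_i(0)\ge f^-_i(x)$. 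Without some such explicit chain, your argument proves only the trivial bound $\cost(g)\ge1$ that you yourself identify as insufficient.

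A secondary concern: even as a sketch, the ``tangential integration'' idea looks misdirected. The unit jumps $g^-_k-g^+_k\ge1$ at distinct points of $S'$ concern derivatives transverse to $S'$ at those points, and there is no apparent additive structure that would let you accumulate them along an arc into a bound at the single point $x$; the factor $c_j/c_i$ arises purely locally, as the limit of the ratio $h/w$ dictated by the normal at $x$. I would drop both the $2$-dimensional reduction and the arc integration and work directly with the two distinguished coordinates $i,j$ in $\O^n$.
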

\begin{proof}
By assumption $S$ behaves linearly on a small neighborhood of $x$, thus
for every small enough positive $w$
there is a unique positive $h$ such that $y=x-w e_i+h e_j\in S$, and
$$
    \lim_{w\to+0} \frac h w = \frac{\nablaS_j(x)}{\strut\nablaS_i(x)}.
$$
Let $f$ be any $S$-feasible EL function, $u=\min(x,y)=x-w e_i$ and 
$v=\max(x,y)=x+h e_j$. The following inequalities follow from the facts that
$f$ is monotone and concave along each coordinate by Claim
\ref{claim:EL-basic}:
\begin{align*}
w\cdot f^+_i(u) &\ge f(x)-f(u), \\
h\cdot f^+_j(x) &\ge f(v)-f(x), \\
f(y)-f(u) &\ge h\cdot f^-_j(y), \\
f(v)-f(y) &\ge 0.
\end{align*}
Their sum proves the first inequality in the sequence
\begin{align*}
   w\cdot f^+_i(u) &\ge h\big(f^-_j(y)-f^+_j(x)\big) \\
          &\ge h\big(1+f^+_j(y)-f^+_j(x)\big) \\
          &\ge h\big(1+f^+_j(v)-f^+_j(x)\big).
\end{align*}
The second inequality follows from $y\in S$ and that $f$ is an
$S$-feasible function. The third one uses the monotonicity of the
derivatives from Claim \ref{claim:EL-basic} (e).
Letting $w\to+0$, $f^+_i(u)\to f^-_i(x)$ and $f^+_j(v)\to f^+_j(x)$ by Lemma
\ref{lemma:limiting}, thus
$$
    f^-_i(x) \ge \frac{\nablaS_j(x)}{\strut\nablaS_i(x)}.
$$
From here the theorem follows as $\cost(f)\ge f^+_i(0) \ge f^-_i(x)$
by the monotonicity of the derivatives.
\end{proof}

\begin{theorem}\label{thm:inf-taken}
Suppose $\OPT(S)<+\infty$ for an s-surface $S$. The optimal value is
taken by some $S$-feasible function $f$, that is, $\cost(f)=\OPT(S)$.
\end{theorem}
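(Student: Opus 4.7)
The plan is a standard compactness argument: extract a pointwise limit of a minimizing sequence and use Remark \ref{remark:2} to preserve both $S$-feasibility and the cost estimate.

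First I would pick $S$-feasible EL functions $f_k$ with $\cost(f_k)\to\OPT(S)$ and fix some $M>\OPT(S)$, so $\cost(f_k)\le M$ for all large $k$. By Claim \ref{claim:EL-basic}(e) the partial derivatives are non-increasing along positive directions, hence $(f_k)^+_i(x)\le (f_k)^+_i(0)\le M$ on all of $\O^n$; combined with pointedness, this bounds $f_k(x)$ by $M\sum_i x_i$ and makes the $f_k$ uniformly $M$-Lipschitz in every coordinate. On a countable dense set $D\subset\O^n$ the values $f_k(x)$ are bounded, so a diagonal argument produces a subsequence (still called $f_k$) converging at every point of $D$, and equicontinuity extends convergence to a pointwise limit $f$ on all of $\O^n$. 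By Remark \ref{remark:2}, $f$ is an EL function.

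For feasibility, fix a positive $x\in S$ and a coordinate $i$. Since $(f_k)^-_i(x)\ge (f_k)^+_i(x)+1$ for every $k$, taking $\limsup_k$ of both sides yields $\limsup_k(f_k)^-_i(x)\ge \limsup_k(f_k)^+_i(x)+1\ge \liminf_k(f_k)^+_i(x)+1$, and the sandwich in Remark \ref{remark:2} then implies
$$
f^-_i(x)\;\ge\;\limsup_k(f_k)^-_i(x)\;\ge\;\liminf_k(f_k)^+_i(x)+1\;\ge\; f^+_i(x)+1,
$$
so $f$ is $S$-feasible. Applying the same one-sided Remark \ref{remark:2} bound at the origin and interchanging $\max$ with $\liminf$ gives
$$
\cost(f)\;=\;\max_i f^+_i(0)\;\le\;\max_i\liminf_k(f_k)^+_i(0)\;\le\;\liminf_k\cost(f_k)\;=\;\OPT(S),
$$
while the reverse inequality is automatic because $f$ is a competitor in the infimum.

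The genuinely delicate point is that left and right partial derivatives are not jointly continuous under pointwise limits of EL functions: in the limit $f^+_i$ may drop and $f^-_i$ may jump up. The asymmetric sandwich in Remark \ref{remark:2} is tailored exactly so that this movement goes the right way for both the feasibility gap $f^-_i-f^+_i$ and for the cost $\max_i f^+_i(0)$. Once this observation is in place, the Arzel\`a--Ascoli / diagonal extraction step is routine.
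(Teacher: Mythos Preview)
Your proof is correct and follows essentially the same compactness-plus-Remark~\ref{remark:2} strategy as the paper. The only technical difference is that the paper first replaces each $f_k$ by $g_k(x)=f_k(\min(x,a))$ on a box $\{x\le a\}$ containing $S$ (invoking Remark~\ref{remark:1}) so that the functions are uniformly bounded and the textbook Arzel\`a--Ascoli theorem applies on a compact set, whereas you run the diagonal extraction directly on $\O^n$; the two routes are equivalent, and your explicit $\liminf/\limsup$ chain is exactly how the paper's appeal to Remark~\ref{remark:2} unpacks.
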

\begin{proof}
Let $\OPT(S)<M$, and choose the sequence of $S$-fea\-si\-ble functions $f_k$ such
that $\cost(f_k)<M$ and $\lim_k\cost(f_k)=\OPT(S)$. Also pick a point $a\in\O^n$
such that $S$ is contained completely in the box $B=\{x\in\O^n: x\le a\}$.
The functions $g_k(x)=f(\min(x,a))$ are EL by Remark \ref{remark:1}, and
$\cost(g_k)=\cost(f_k)$. Each $g_k$ is clearly $S$-feasible and is bounded by
$M\cdot(a_1+\cdots+a_n)$. The sequence $\{g_k\}$ is uniformly equicontinuous 
as all partial derivatives are bounded by $M$, thus the 
Arzel\`a--Ascoli theorem \cite{rudin-1976} guarantees a subsequence which
converges uniformly on $B$ -- and then converges everywhere. Denote this
subsequence also by $\{g_k\}$, and let the pointwise limit be $g$. By Remark \ref{remark:2}
$g$ is an EL function and $\cost(g)\le\liminf_k\cost(g_k)=\OPT(S)$. Also,
each $g_k$ is $S$-feasible, that is, at the points of $S$ the difference 
between the left and right derivatives is at least 1:
$$
  (g_k)^-_i(x) - (g_k)^+_i(x)\ge 1, ~~~~ x\in S.
$$
By Remark \ref{remark:2} the same is true for the limit function $g$. Thus there is
an $S$-feasible function $g$ with $\cost(g)\le\OPT(S)$, which proves the
theorem.
\end{proof}

\section{Two-dimensional cases}\label{sec:bipartite}

We have seen that the bound provided by Theorem \ref{thm:main} is sharp when
$S$ is linear. We show that, at least in the two-dimensional case, it is
also sharp when $S$ is strictly convex or strictly concave by constructing
matching $S$-feasible EL functions.

In two dimensions $S$ is a strictly decreasing continuous curve. Write $S$
as $\{(x,\alpha(x)): 0\le x\le a\}$, and also as $\{(\beta(y),y):0\le y\le
b\}$, see Figure \ref{fig:S}.
\begin{figure}[thb]
\noindent\hbox to\linewidth{\space
\begin{tikzpicture}[scale=3]
\fill[color=gray!10!white]
(0,0)--(0,0.391)--(0.461,0.391)--(0.461,0)--cycle;
\fill[color=gray!8!white]
(0,0.391)--(0.461,0.391)--(0.461,0.931)--(0,0.931)--cycle;
\fill[color=gray!8!white]
(0.461,0.391)--(0.461,0)--(1.09,0)--(1.09,0.391)--cycle;
\draw[thick]
(0.000,0.929) -- (0.019,0.875) -- 
(0.052,0.821) -- (0.094,0.768) -- 
(0.142,0.714) -- (0.194,0.660) -- 
(0.246,0.606) -- (0.300,0.552) -- 
(0.354,0.498) -- (0.407,0.444) --
(0.461,0.391) -- (0.515,0.339) -- 
(0.569,0.291) -- (0.623,0.245) -- 
(0.677,0.202) -- (0.731,0.162) -- 
(0.785,0.124) -- (0.838,0.089) -- 
(0.892,0.057) -- (0.946,0.027) -- 
(1.000,0.000);
\draw[->] (0,0) -- (1.1,0);
\draw[->] (0,0) -- (0,0.96);
\draw (0.461,0.391) node {$\bullet$} (0.461,0.391) node [above right] {$T$};
\draw (-0.05,0.391) -- (0.461,0.391)-- (0.461,-0.05);
\draw (-0.05,0.391) node [left] {$t_y$} (-0.05,-0.05) node {$0$}
      (0.461,-0.05) node [right] {$t_x$};
\draw (1,0,0) node [below] {$a$}
      (0,0.93) node [left] {$b$};
\draw  (0.194,0.660) node [right]{$\alpha(x)$};
\draw (0.731,0.162) node [right] {$\beta(y)$};
\end{tikzpicture}\hss}\par\kern-6pt%
\caption{The curve $S$}\label{fig:S}%
\end{figure}
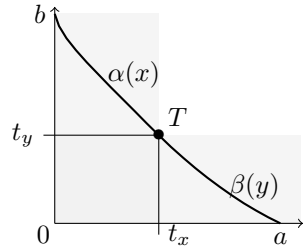

If $S$ is either convex or concave, then $\nablaS_i(x) / \nablaS_j(x)$ is
increasing or decreasing along the curve, thus attains its maximal value at
one of the endpoints.

First assume that $S$ is strictly convex. In this case both 
$\alpha$ and $\beta$ are convex functions. Let
$T=(t_x,t_y)$ be the point on $S$ where the normal is $(1,1)$. On the $[0,t_x]$
interval the derivative $\alpha'(x)$ is $\le -1$, and, similarly, $\beta'(y)\le -1$ on
$[0,t_y]$. The function $f$ depicted on Figure \ref{fig:convex1} is
defined as follows.
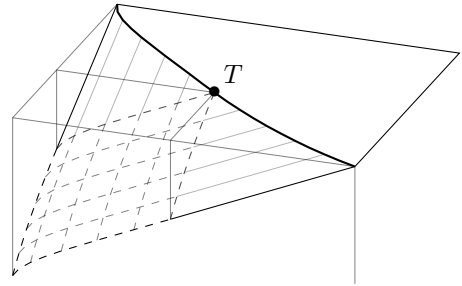
\begin{figure}[b]%
\noindent\hbox to\linewidth{\quad
\begin{tikzpicture}[scale=0.65,x={(7.0cm,-1cm)},y={(2.3cm,2.5cm)},z={(0cm,3cm)}]
\draw[very thin,opacity=0.3]
(0.569,0.291,0.0) --  (0.461,0.291,-0.108)
(0.677,0.202,0.0) -- (0.461,0.202,-0.215)
(0.785,0.124,0.0) -- (0.461,0.124,-0.323)
(0.892,0.057,0.0) -- (0.461,0.057,-0.431);
\draw[very thin,opacity=0.3]
(0.052,0.821, 0.0) -- (0.052,0.391,-0.431)
(0.142,0.714, 0.0) -- (0.142,0.391,-0.323)
(0.246,0.606, 0.0) -- (0.246,0.391,-0.215)
(0.354,0.498, 0.0) -- (0.354,0.391,-0.108);

\foreach \y/\z in {0.291/-0.108,0.202/-0.215,0.124/-0.323,0.057/-0.431}{
\draw[very thin,opacity=0.5,dashed]
(0.000,\y,\z-0.539) -- (0.019,\y,\z-0.485) -- 
(0.052,\y,\z-0.431) -- (0.094,\y,\z-0.377) -- 
(0.142,\y,\z-0.323) -- (0.194,\y,\z-0.269) -- 
(0.246,\y,\z-0.215) -- (0.300,\y,\z-0.162) -- 
(0.354,\y,\z-0.108) -- (0.407,\y,\z-0.054) --
(0.461,\y,\z+0.000);
}

\foreach \x\z in {0.354/-0.108,0.246/-0.215,0.142/-0.323,0.052/-0.431}{
\draw[very thin,opacity=0.5, dashed]
(\x,0.391,\z+0.000) -- (\x,0.339,\z-0.054) -- 
(\x,0.291,\z-0.108) -- (\x,0.245,\z-0.162) -- 
(\x,0.202,\z-0.215) -- (\x,0.162,\z-0.269) -- 
(\x,0.124,\z-0.323) -- (\x,0.089,\z-0.377) -- 
(\x,0.057,\z-0.431) -- (\x,0.027,\z-0.485) -- 
(\x,0.000,\z-0.539);
}

\draw[very thin,opacity=0.5](0,0.929,0)--(0,0,0)--(1,0,0)--(1,0,-0.8);
\draw (1,0,0)--(1,0.929,0)--(0,0.929,0);
\draw (0.000,0.391,-0.539)--(0,0.929,0) (1,0,0)--(0.461,0.0,-0.539);
\draw[very thin, opacity=0.5](0,0,0)--(0,0,-1.077);
\draw[very thin,opacity=0.5] (0.461,0.391, 0.0)--(0.461,0,0)-- (0.461,0.0,-0.539);
\draw[very thin,opacity=0.5] (0.461,0.391, 0.0)--(0,0.391,0) -- (0.000,0.391,-0.539);

\draw (0.461,0.391, 0.0) node {$\bullet$};
\draw (0.461,0.391, 0.0) node[above right] {$T$};
\draw[thick]
(0.000,0.929, 0.0) -- (0.019,0.875, 0.0) -- 
(0.052,0.821, 0.0) -- (0.094,0.768, 0.0) -- 
(0.142,0.714, 0.0) -- (0.194,0.660, 0.0) -- 
(0.246,0.606, 0.0) -- (0.300,0.552, 0.0) -- 
(0.354,0.498, 0.0) -- (0.407,0.444, 0.0) -- (0.461,0.391, 0.0);
\draw[dashed,opacity=0.7]
(0.000,0.391,-0.539) -- (0.019,0.391,-0.485) -- 
(0.052,0.391,-0.431) -- (0.094,0.391,-0.377) -- 
(0.142,0.391,-0.323) -- (0.194,0.391,-0.269) -- 
(0.246,0.391,-0.215) -- (0.300,0.391,-0.162) -- 
(0.354,0.391,-0.108) -- (0.407,0.391,-0.054) -- (0.461,0.391,0.000);
\draw[dashed]
(0.000,0.0,-1.077) -- (0.019,0.0,-1.023) -- (0.052,0.0,-0.970) -- (0.094,0.0,-0.916) -- (0.142,0.0,-0.862) -- (0.194,0.0,-0.808) -- (0.246,0.0,-0.754) -- (0.300,0.0,-0.700) -- (0.354,0.0,-0.646) -- (0.407,0.0,-0.593) -- (0.461,0.0,-0.539);
\draw[thick]
(0.461,0.391,0.0) -- (0.515,0.339,0.0) -- (0.569,0.291,0.0) -- (0.623,0.245,0.0) -- (0.677,0.202,0.0) -- (0.731,0.162,0.0) -- (0.785,0.124,0.0) -- (0.838,0.089,0.0) -- (0.892,0.057,0.0) -- (0.946,0.027,0.0) -- (1.000,0.000,0.0);
\draw[dashed,opacity=0.7]
(0.461,0.391,0.000) -- (0.461,0.339,-0.054) -- (0.461,0.291,-0.108) -- (0.461,0.245,-0.162) -- (0.461,0.202,-0.215) -- (0.461,0.162,-0.269) -- (0.461,0.124,-0.323) -- (0.461,0.089,-0.377) -- (0.461,0.057,-0.431) -- (0.461,0.027,-0.485) -- (0.461,0.000,-0.539);
\draw[dashed]
(0.0, 0.391,-0.539) -- (0.0, 0.339,-0.593) -- (0.0, 0.291,-0.646) -- (0.0, 0.245,-0.700) -- (0.0, 0.202,-0.754) -- (0.0, 0.162,-0.808) -- (0.0, 0.124,-0.862) -- (0.0, 0.089,-0.916) -- (0.0, 0.057,-0.970) -- (0.0, 0.027,-1.023) -- (0.0, 0.000,-1.077);
\end{tikzpicture}\hss}\par\kern -6pt%
\caption{Convex case}\label{fig:convex1}%
\end{figure}
If both $x\ge t_x$ and $y\ge t_y$ then $f(x,y)=C$, otherwise
$$
f(x,y)=\left\{\mkern-4mu\begin{array}{ll}
C+\min\{x-\beta(y),0\} & \text{if $x\ge t_x$,}\\
C+\min\{y-\alpha(x),0\} & \text{if $y\ge t_y$,}\\
a-\alpha(x)+b-\beta(y) & \text{otherwise,}
\end{array}\right.
$$
where $C=a-t_x+b-t_y$. Clearly $f$ has a flat plateau of height $C$
beyond the curve $S$. It is a routine
to check that $f$ is an EL function; one has to use that $-\alpha(x)$ and
$-\beta(y)$ are concave functions and have derivative $1$ at $x=t_x$ and
$y=t_y$, respectively. The left and right partial derivatives of $f$ at
$(x,y)\in S$ are $(1,0)$ and $(-\beta'(y),0)$ when $x\ge t_x$, and
$(-\alpha'(x),0)$ and $(1,0)$ when $y\ge t_y$. In all cases the values in
the pair differ by
at least one, thus $f$ is a feasible $S$-function. The partial
derivatives of $f$ at zero are $-\alpha'(0)$ and $-\beta'(0)$, thus
$$
    \cost(f)=\max\{ -\alpha'(0), -\beta'(0) \}
$$
matching the lower bound of Theorem \ref{thm:main}.

In the case when no point on $S$ has normal $(1,1)$ the simpler construction
using only the first (or second) line in the definition of the function $f$
works.

\smallskip

A different construction is illustrated on Figure \ref{fig:convex2}
which also meets the lower bound of Theorem \ref{thm:main}. It also shows that the
optimal EL function, if exists, is not necessarily unique.
Using the same notation as above, 
\begin{figure}[t]
\noindent\hbox to\linewidth{\quad
\begin{tikzpicture}[scale=0.65,x={(7.0cm,-1cm)},y={(2.3cm,2.5cm)},z={(0cm,3cm)}]
\foreach \x\y in {0.569/0.291,0.677/0.202,0.785/0.124,0.892/0.057}{
\draw[very thin,opacity=0.3] (\x,\y,0)--(\x,0,\y*-1.263);
}
\foreach \x\y in {0.052/0.821,0.142/0.714,0.246/0.606,0.354/0.498}{
\draw[very thin,opacity=0.3] (\x,\y,0)--(0,\y,\x*-1.262);
}

\foreach \x in {2,4,6,8}{
\draw[very thin,opacity=0.3]
 (0,{(10-\x)*0.0391},{-(10-\x)*0.0582-\x*0.1077}) -- (0.461,{(10-\x)*0.0391},-\x*0.0494)
 (\x*0.0461,0.391,{(10-\x)*-0.0582}) -- (\x*0.0461,0,{(10-\x)*-0.1077+\x*-0.0494});
}

\draw[very thin, opacity=0.5](0,0.929,0)--(0,0,0)--(1,0,0)--(1,0,-0.8);
\draw(0,0.929,0)--(1,0.929,0)--(1,0,0);
\draw[very thin, opacity=0.5](0,0,0)--(0,0,-1.077);
\draw[very thin,opacity=0.5] (0.461,0.391, 0.0)--(0.461,0,0)-- (0.461,0.0,-0.494);
\draw[very thin,opacity=0.5] (0.461,0.391, 0.0)--(0,0.391,0) -- (0.000,0.391,-0.582);
\draw[opacity=0.7] (0.000,0.391,-0.582)--(0.461,0.391,0.0)--(0.461,0.0,-0.494);
\draw (0.461,0.0,-0.494)--(0,0,-1.077)--(0.000,0.391,-0.582);
\draw[thick]
(0.000,0.929, 0.0) -- (0.019,0.875, 0.0) -- 
(0.052,0.821, 0.0) -- (0.094,0.768, 0.0) -- 
(0.142,0.714, 0.0) -- (0.194,0.660, 0.0) -- 
(0.246,0.606, 0.0) -- (0.300,0.552, 0.0) -- 
(0.354,0.498, 0.0) -- (0.407,0.444, 0.0) -- (0.461,0.391, 0.0);
\draw[thick]
(0.461,0.391,0.0) -- (0.515,0.339,0.0) -- 
(0.569,0.291,0.0) -- (0.623,0.245,0.0) -- 
(0.677,0.202,0.0) -- (0.731,0.162,0.0) -- 
(0.785,0.124,0.0) -- (0.838,0.089,0.0) -- 
(0.892,0.057,0.0) -- (0.946,0.027,0.0) -- (1.000,0.000,0.0);
\draw ((0.461,0.391, 0.0) node {$\bullet$};
\draw (0.461,0.391, 0.0) node[above right] {$T$};
\foreach \x in {-1.263}{
\draw[dashed]
(0.461,0,\x*0.391) -- (0.515,0,\x*0.339) -- 
(0.569,0,\x*0.291) -- (0.623,0,\x*0.245) -- 
(0.677,0,\x*0.202) -- (0.731,0,\x*0.162) -- 
(0.785,0,\x*0.124) -- (0.838,0,\x*0.089) -- 
(0.892,0,\x*0.057) -- (0.946,0,\x*0.027) -- (1.000,0.000,0.0);
}
\foreach \x in {-1.262}{
\draw[dashed]
(0,0.929,  0.0) --    (0,0.875,\x*0.019) -- 
(0,0.821,\x*0.052) -- (0,0.768,\x*0.094) -- 
(0,0.714,\x*0.142) -- (0,0.660,\x*0.194) -- 
(0,0.606,\x*0.246) -- (0,0.552,\x*0.300) -- 
(0,0.498,\x*0.354) -- (0,0.444,\x*0.407) -- (0,0.391,\x*0.461);
}
\end{tikzpicture}%
\hss}\par\kern-6pt%
\caption{Alternate construction for the convex case}\label{fig:convex2}%
\end{figure}
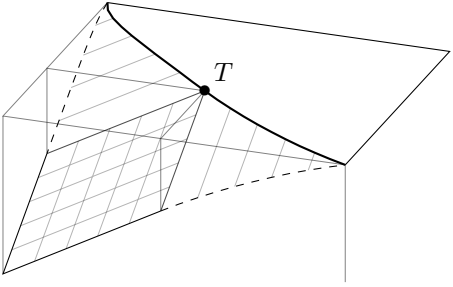
the function $f$ is defined analogously: $f(x,y)=C$ if $x\ge t_x$ and $y\ge
t_y$, otherwise
$$
  f(x,y)=\left\{\mkern-4mu\begin{array}{ll}
  C+\min\{y-\alpha(x),0\} & \text{if $x\ge t_x$,}\\
  C+\min\{x-\beta(y),0\} &  \text{if $y\ge t_y$,}\\
  x+y & \text{otherwise,}
\end{array}\right.
$$
where $C=t_x+t_y$. This is again an EL function, its cost is clearly $1$.
The difference between the left and right partial derivatives at points of
$S$ are $-\alpha'(x)$ and $1$ when $x\ge t_x$, and $1$ and $-\beta'(y)$ when
$y\ge t_y$, thus the difference is at least $k=\min\{-\alpha'(a), -\beta'(b)\}$.
Consequently the EL function $k^{-1}f(x,y)$ is feasible for $S$, and its
cost, $1/k$, matches the lower bound in Theorem \ref{thm:main}.


\smallskip

The third construction, depicted on Figure \ref{fig:concave}, works for any
strictly
concave curve $S$. In this case the plateau is not flat any more. Using the
same notations as before, the decreasing functions $\alpha(x)$ and $\beta(y)$
are strictly concave, and $T=(t_x,t_y)$ is the curve point with normal
$(1,1)$.
\begin{figure}[b]
\noindent\hbox to\linewidth{\quad
\begin{tikzpicture}[scale=0.65,x={(7.0cm,-1cm)},y={(2.3cm,2.5cm)},z={(0cm,3cm)}]
\coordinate (A) at (0.649,0.423,0);
\draw[very thin,opacity=0.5] (0,0.928,0.928-1.072)--(0,0.928,0) -- (0,0,0) --
(1,0,0)--(1,0,-0.8);
\draw[very thin,opacity=0.5] (0,0,0)--(0,0,-1.072);
\draw[very thin,opacity=0.5] (0,0.423,0) -- (A) -- (0.649,0,0)
    (0,0.928,0)--(1,0.928,0)--(1,0,0);
\draw (0.649,0.928,0)--(1,0.928,0)--(1,0.423,0);
\draw[very thin,opacity=0.5] (0.649,0,0)--(0.649,0,-0.423)
                              (0,0.423,0)--(0,0.423,-0.649);
\draw (0,0.423,-0.649)--(A)--(0.649,0,-0.423);
\draw (0.649,0.928,0)--(A)--(1,0.423,0);
\draw (0,0,-1.072) -- (1,0,-0.072) (0,0,-1.072) -- (0,0.928,0.928-1.072);

\draw (A) node {$\bullet$};
\draw (A)+(0.06,0.12,0) node {$T$};
\draw[thick] (0,0.928,0.928-1.072)
\foreach \x/\y in {
0.000/0.928,0.059/0.892,0.117/0.856,0.174/0.818, 
0.231/0.778,0.286/0.738,0.341/0.696,0.395/0.654, 
0.448/0.610,0.500/0.565,0.551/0.518,0.600/0.471, 
0.649/0.423,0.697/0.373,0.744/0.323,0.789/0.271, 
0.834/0.219,0.877/0.166,0.919/0.111,0.960/0.056, 
1.000/0.000}
{   -- (\x,\y,\x+\y-1.072) 
};

\draw[dashed] (1,0.423,0)
\foreach \x/\y in {0.649/0.423,0.697/0.373,0.744/0.323,0.789/0.271, 
0.834/0.219,0.877/0.166,0.919/0.111,0.960/0.056, 
1.000/0.000} {
    -- (1,\y,\x+\y-1.072)
};
\draw[dashed] (0,0.928,0.928-1.072)
\foreach \x/\y in {0.000/0.928,0.059/0.892,0.117/0.856,0.174/0.818, 
0.231/0.778,0.286/0.738,0.341/0.696,0.395/0.654, 
0.448/0.610,0.500/0.565,0.551/0.518,0.600/0.471,0.649/0.423} 
{  -- (\x,0.928,\x+\y-1.072)
};
\foreach \x/\y in {0.744/0.323,0.834/0.219,0.919/0.111}
{
  \draw[very thin,opacity=0.3] 
  (0,\y,\y-1.072)--(\x,\y,\x+\y-1.072)--(1,\y,\x+\y-1.072);
}
\foreach \x/\y in {0.117/0.856,0.231/0.778,0.341/0.696,0.448/0.610,0.551/0.518}
{ \draw[very thin,opacity=0.3]
  (\x,0,\x-1.072)--(\x,\y,\x+\y-1.072)--(\x,0.928,\x+\y-1.072);
}
\end{tikzpicture}%
\hss}\kern -6pt%
\caption{Concave case}\label{fig:concave}%
\end{figure}
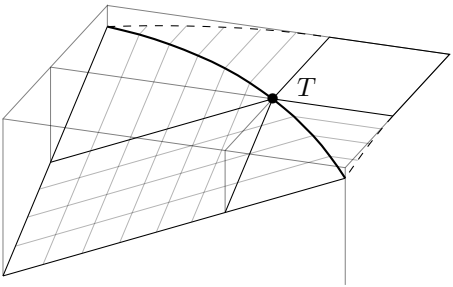
The function $f(x,y)$ is defined as $f(x,y)=t_x+t_y$ if both $x\ge t_x$ and
$y\ge t_y$, otherwise
$$
  f(x,y)=\left\{\mkern-4mu\begin{array}{ll}
    y+\min\{x,\beta(y)\} & \text{if $x\ge t_x$,}\\
    x+\min\{y,\alpha(x)\} & \text{if $y\ge t_y$,}\\
    x+y & \text{otherwise.}
\end{array}\right.
$$
This is an EL function. For example, for a fixed $x\ge t_x$ it is increasing
and concave as $y+\beta(y)$ is increasing on the $[0,t_y]$ interval ($\beta'(y)\le -1$
here), and is concave since $\beta$ is concave. The left and right
partial derivatives of $f$ at a point $(x,y)$ of $S$ with $x\ge t_x$ are $1$
and $0$, and $1$ and $1+\beta'(y)$, respectively. The difference between the
corresponding pairs is at least
$-\beta'(y)\ge -\beta'(0)$. Choosing the multiplier $k$ such that
$k\cdot(-\alpha'(0))\ge 1$ and $k\cdot(-\beta'(0))\ge 1$, the EL function $k\cdot
f$ will be $S$-feasible. The minimal such $k$ gives a cost $k$ EL function
which again matches the lower bound of Theorem \ref{thm:main}.

\section{Conclusion}\label{sec:conclusion}

A continuous version of the discrete Shannon entropy functions, called
\emph{entropy-like}, or EL functions, has been defined in Definition
\ref{def:EL-function}. They form a natural subclass of multivariate
continuous submodular functions which gained considerable attention recently
\cite{bach-submodular}. Interestingly, the same subclass emerged as a
crucial one when investigating possible parallelization of traditional
submodular optimization algorithms \cite{bian2020continuous}.

Motivated by difficult problems in multipartite secret sharing
\cite{padro:idealmulti}, points in the non-negative orthant are flagged as either
qualified or unqualified, separated by a \emph{secret sharing surface $S$}, see
Definition \ref{def:s-surface}. An EL function is \emph{feasible} for such a
surface $S$ if at internal points of $S$ all partial derivatives drop by at
least one  when passing from left to right.
The following optimization problem was considered: for a given
s-surface $S$ find the minimal cost of an $S$-feasible EL function. The first open
problem is to prove that this function set is never empty.
\begin{problem}\label{problem:1}
Prove that for every s-surface $S$ there exists at least one $S$-feasible function.
\end{problem}
\noindent
The cost of an EL function $f$ is the maximum of its partial derivatives at
zero, thus it can be $+\infty$. Definition \ref{def:s-surface} stipulates
that for every $S$-surface there is a positive constant $c$ such that
$1/c<\nablaS_i(x)<c$ at each point $x\in S$. The value in Theorem
\ref{thm:main} bounding the cost of any $S$-feasible function from below 
is smaller than 
$c^2$, thus it does not exclude the following strengthening of Problem
\ref{problem:1}:
\begin{problem}\label{problem:2}
Prove that for every s-surface $S$ there is at least one $S$-feasible
function with \emph{finite cost}.
\end{problem}
The lower bound on the cost of $S$-feasible EL functions proved in
Theorem \ref{thm:main} was shown to be tight for linear s-sur\-faces, and also
for two-dimensional convex and concave s-sur\-faces.
\begin{problem}\label{problem:3}
Find an s-surface $S$ for which the bound in Theorem \ref{thm:main} is not
tight.
\end{problem}
\noindent
As a strenghtening of Problem \ref{problem:3} we offer a bold conjecture which might easily turn out to be false.
\begin{problem}
If $S$ is neither convex nor concave, then the bound of Theorem
\ref{thm:main} is not tight.
\end{problem}
Constructions in Section \ref{sec:bipartite} settled the problem of
finding the optimal values for
two-di\-men\-sion\-al
convex and concave s-surfaces. It would be interesting to
see optimal solutions for convex and concave surfaces in higher
dimensions.
\begin{problem}
Determine the optimal costs of convex and con\-cave s-surfaces in dimension $>2$.
\end{problem}

As mentioned in Section \ref{sec:problem}, the cost function considered in
this paper stems from the \emph{worst case complexity} of general secret
sharing schemes. An alternate cost function corresponding to the
total entropy would be
$$
   \cost^t(f) = \sup \{\, f(x): x \in\dom(f)\,\}.
$$ 
As an EL function can be truncated, the sup here can be limited to the points of
$S$. The two costs functions are obviously related, but it is not clear how
this relationship can be used to connect the corresponding optimization
problems.

\begin{problem}
Prove lower bounds, similar to Theorem \ref{thm:main}, for the optimization
problem $\OPT^t(S)$ using the $\cost^t$ function.
\end{problem}

By Theorem \ref{thm:inf-taken}, if there is any $S$-feasible function at all 
then there is one with minimal cost. The proof relied on the fact that finite
cost EL functions have bounded derivatives. For $\cost^t$ this property does not
hold anymore.
\begin{problem}
If there is an
$S$-feasible function, then there is one  with $\cost^t(f)=\OPT^t(S)$.
\end{problem}

Finally, extend the quite meager collection of s-surfaces from Section
\ref{sec:bipartite} for which the exact bound is known.
\begin{problem}
Find optimal solutions for additional ``interesting'' s-surfaces for both
cost functions.
\end{problem}

\section*{Acknowledgment}
The work of the author was supported by the GACR grant number 19-045798, 
and by the Lend\"ulet program of the HAS.

\bibliographystyle{plain}
\bibliography{22}

\end{document}